\title{Geodesic knots in cusped hyperbolic 3--manifolds}
\author{Sally M Kuhlmann}
\address{Department of Mathematics and Statistics\\
University of Melbourne\\\newline
Victoria, 3010\\Australia}
\email{S.Kuhlmann@ms.unimelb.edu.au}
\urladdr{} 
\def\N{\mathbb N}
\def\H{\mathbb H}
\newcommand{\dist}{\operatorname{dist}}
\newcommand{\PSL}{\ensuremath{\mathrm{PSL}}}
\newcommand{\Isom}{\ensuremath{\mathrm{Isom}}}
\def\cnewtheorem#1[#2]#3{\newtheorem{#1}{#3}[section]
\expandafter\let\csname c@#1\endcsname\c@thm}
\theoremstyle{plain}
\newtheorem{thm}{Theorem}[section]
\newtheorem*{cuspedthm}{\fullref{thminfinite}}
\theoremstyle{definition}
\newtheorem*{defn}{Definition}
\newtheorem{que}{Question}
\begin{document}

\begin{asciiabstract}
We consider the existence of simple closed geodesics or "geodesic knots" in 
finite volume orientable hyperbolic 3-manifolds.  Previous results show that at
least one geodesic knot always exists [Bull. London Math. Soc. 31(1) (1999)
81-86], and that certain arithmetic manifolds contain infinitely many geodesic
knots [J. Diff. Geom. 38 (1993) 545-558], [Experimental Mathematics 10(3) 
(2001) 419-436].  In this paper we show that all cusped orientable finite 
volume hyperbolic 3-manifolds contain infinitely many geodesic knots.  Our 
proof is constructive, and the infinite family of geodesic knots produced 
approach a limiting infinite simple geodesic in the manifold.
\end{asciiabstract}

\begin{webabstract}
We consider the existence of simple closed geodesics or ``geodesic knots'' in 
finite volume orientable hyperbolic 3-manifolds.  Previous results show that a
least one geodesic knot always exists [Bull. London Math. Soc. 31(1) (1999)
81--86], and that certain arithmetic manifolds contain infinitely many geodesic
knots [J. Diff. Geom. 38 (1993) 545--558], [Experimental Mathematics 10(3) 
(2001) 419--436].  In this paper we show that all cusped orientable finite 
volume hyperbolic 3-manifolds contain infinitely many geodesic knots.  Our 
proof is constructive, and the infinite family of geodesic knots produced 
approach a limiting infinite simple geodesic in the manifold.
\end{webabstract}

\begin{abstract}
We consider the existence of simple closed geodesics or ``geodesic knots'' in 
finite volume orientable hyperbolic 3--manifolds.  Previous results show that 
at least one geodesic knot always exists~\cite{ahs}, and that certain 
arithmetic manifolds contain infinitely many geodesic knots~\cite{chr}, 
\cite{me}.  In this paper we show that all cusped orientable finite volume 
hyperbolic 3--manifolds contain infinitely many geodesic knots.  Our proof is 
constructive, and the infinite family of geodesic knots produced 
approach a limiting infinite simple geodesic in the manifold.
\end{abstract}

\maketitle

\section{Introduction}

A geodesic in a Riemannian manifold is said to be simple if it has no 
self-intersections and nonsimple otherwise.  In this paper we study geodesics 
in hyperbolic manifolds, that is, complete Riemannian manifolds of constant 
curvature $-1$.  Properties of simple closed geodesics in hyperbolic 
3--manifolds were first studied by Sakai~\cite{s}, who introduced the 
terminology ``geodesic knots'' to describe them.
\vspace{-2pt}

A Riemannian manifold may or may not contain simple closed geodesics, and the 
question of which ones do is open in general.  However, the answer is known for
hyperbolic manifolds of dimension two and three.  In an orientable finite area 
hyperbolic 2--manifold, each non-contractible non-peripheral simple closed 
curve is homotopic to a simple closed geodesic, hence the only example not 
containing a simple closed geodesic is the thrice-punctured sphere.  The case 
of orientable hyperbolic 3--manifolds was solved by Adams, Hass and 
Scott~\cite{ahs}.  They showed that every finite volume orientable hyperbolic
3--manifold contains a geodesic knot, and that the only non-elementary infinite
volume exception is the quotient of $\H^3$ by a Fuchsian group corresponding to
the thrice-punctured sphere.
\vspace{-2pt}

Since self-intersections of a 1--dimensional loop in a 3--dimensional manifold
should be rare, we might expect that most hyperbolic 3--manifolds in fact admit
infinitely many geodesic knots.  This paper thus addresses the question:
\begin{que} \label{queinfinite}
Which hyperbolic 3--manifolds contain infinitely many geodesic knots?
\end{que}


Some partial answers to \fullref{queinfinite} are known.
Chinburg and Reid~\cite{chr} showed that there exist infinitely many 
noncommensurable closed hyperbolic 3--manifolds all of whose closed geodesics 
are simple, thus providing examples of arithmetic hyperbolic 3--manifolds
containing infinitely many geodesic knots.
In \cite{me} we showed using arithmetic methods that if $M=\H^3/\Gamma$ is a 
hyperbolic 3--orbifold such that $\Gamma$ is a finite index subgroup of the 
Bianchi group $\Gamma_d=\PSL_2(\mathcal{O}_d)$ for some square-free positive 
integer $d$, then $M$ contains infinitely many geodesic knots.
In this paper we take a geometric approach and prove a general result for 
cusped hyperbolic 3--manifolds, that is, non-compact hyperbolic 3--manifolds of
finite volume.

\begin{thm}\label{thminfinite}
  Every cusped orientable hyperbolic 3--manifold contains infinitely
  many geodesic knots.
\end{thm}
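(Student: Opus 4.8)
The plan is to construct an infinite family of simple closed geodesics explicitly, using the cusp geometry as the source of control. The key idea: near a cusp, the manifold looks like a quotient of a horoball neighborhood, and geodesics that travel deep into the cusp and come back wind around the cusp torus in a controlled way. Let me think about how to make this concrete.

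Let me set up the framework. Let $M = \mathbb{H}^3/\Gamma$ be a cusped orientable hyperbolic 3-manifold. Lift to the upper half-space model so that one cusp corresponds to the point at infinity, stabilized by a rank-2 parabolic subgroup $P < \Gamma$ generated by two parabolic translations. A horoball $B$ based at $\infty$ (say $\{z : \text{height} > h\}$) projects to an embedded cusp neighborhood when $h$ is large enough (by the Margulis lemma / Shimizu's lemma). Geodesics in $\mathbb{H}^3$ are vertical lines or semicircles with endpoints on the boundary $\partial \mathbb{H}^3 = \mathbb{C} \cup \{\infty\}$.

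So the construction should produce geodesics that go high up into the cusp. A natural idea: take a fixed simple geodesic segment or a geodesic knot guaranteed by Adams–Hass–Scott, and modify it by inserting longer and longer excursions into the cusp. Each excursion is a semicircle whose apex is very high in the horoball; the higher it goes, the "thinner" the returning strands are near the top, and in the thin cusp region the geodesic strands are nearly vertical and separated by the parabolic lattice, hence simple up there. The family is indexed by how deep the excursion goes, producing infinitely many distinct closed geodesics. The abstract tells me these approach a limiting infinite simple geodesic — this strongly suggests the construction fixes the "bulk" behavior and lets the cusp excursion grow, limiting to an infinite geodesic that goes straight up the cusp.

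Now the genuinely hard part: ensuring each member of the family is \emph{simple} (no self-intersections), not merely a closed geodesic. Two things must be controlled. First, self-intersections \emph{within the cusp}: the strands going up and coming down, together with their translates under the parabolic lattice $P$, must be disjoint. This requires a transversality/genericity argument — by choosing the endpoints on $\mathbb{C}$ generically (avoiding a measure-zero set of "bad" directions where translated strands meet) and the height large, one arranges the strands to project to disjoint arcs in the cusp torus cross-section. Second, self-intersections in the \emph{thick part}: the excursion must attach to a simple "core" arc without creating crossings, and crossings between the fixed core and the (shrinking, nearly-vertical) cusp strands must be excluded; since deeper excursions are confined to thinner and thinner horoball regions, their projections to the thick part shrink toward a single point, so for $h$ large no new thick-part intersections appear. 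The main obstacle is thus a quantitative simultaneous control: proving that one can choose excursion data making the \emph{entire} closed geodesic simple, uniformly as the excursion depth increases, and proving that infinitely many of the resulting geodesics are genuinely distinct (distinct free homotopy classes, or distinct lengths). I expect the proof to handle this via a careful analysis in the horoball: parametrize candidate geodesics by a point varying in the cusp cross-section, show the "non-simple" locus is small, and extract infinitely many good parameters whose geodesics have strictly increasing length and hence cannot coincide.
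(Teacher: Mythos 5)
There is a genuine gap, and it sits exactly at the step you flag as the hard part. Your plan to get simplicity inside the cusp is a genericity argument: ``choose the endpoints on $\C$ generically (avoiding a measure-zero set of bad directions)'' and ``show the non-simple locus is small.'' But the candidate closed geodesics form a countable rigid family --- in the paper's setup they correspond to the discrete set of isometries $g_{p,q}=a^pb^qg\in\Gamma$ taking one horosphere of $\pi^{-1}(T)$ to an adjacent one, parametrized by $(p,q)\in\Z^2$ --- so there is no continuous parameter to perturb and a measure-zero avoidance argument has no force. Worse, the bad set need not be negligible even in a counting sense: your heuristic that deep in the cusp the up- and down-strands are ``separated by the parabolic lattice, hence simple up there'' is false in general; the paper notes that in the figure-eight knot complement \emph{infinitely many} members of the family are nonsimple, so one cannot argue that all sufficiently deep excursions work. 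The paper's replacement for your genericity step is concrete and arithmetic: the portion of $\gamma_{p,q}$ inside the maximal cusp projects radially to a Euclidean segment $\widetilde{L}_{p,q}$ on a horosphere, and \fullref{lemlpqnonsimple} shows (via a midpoint symmetry trick matching points at equal heights) that the long arc is nonsimple \emph{if and only if} $\widetilde{L}_{p,q}$ contains two points differing by a vector in $\Z t_\alpha+\Z t_\beta$. Simplicity is then verified not generically but for the explicit subfamily $q=0$, using the fact that the second lift $B_{0,0}$ of the bumping point has fractional $t_\beta$--coordinate $y_0\in(0,1)$, so the total $t_\beta$--displacement along $\widetilde{L}_{p,0}$ lies strictly between consecutive integers for all large $|p|$.

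The second divergence is your anchoring of the construction to a fixed simple core (an Adams--Hass--Scott knot) with excursions inserted. The closed geodesic in the composed homotopy class is not ``core plus excursion'': as the winding grows, its axis endpoints converge to a pair of parabolic fixed points, so the geodesics converge to a cusp-to-cusp geodesic and the fixed core disappears from the limit; the thick-part portion converges to the thick part of that infinite geodesic, whose embeddedness would itself need proof. The paper's device for killing this problem is the choice of the limiting geodesic $\gamma_\infty$ through a \emph{bumping point} $A$, a self-tangency of the maximal cusp torus $T$, perpendicular to $T$. With that choice the entire thick-part contribution is a ``short arc'' that shrinks to the single point $A$, and embeddedness outside the cusp is immediate because the short arc's lift eventually lies in the region $R$ of points closer to one lift $A_{0,0}$ of $A$ than to any other lift, which projects injectively to $M$ (\fullref{lemshortsimple}). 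Your proposal contains the correct global picture --- a discrete family limiting to an infinite simple geodesic, with separate control inside and outside the cusp --- but without the bumping-point choice and without a deterministic criterion replacing genericity, both halves of the simplicity argument are unproven as stated.
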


The idea is to consider infinite families of closed geodesics which approach a 
limiting infinite simple geodesic in the manifold, and to show that infinitely 
many of these closed geodesics are embedded.  This idea can be illustrated one 
dimension down.  Consider an infinite cusp-to-cusp geodesic in a cusped 
hyperbolic 2--manifold.  This geodesic can be perturbed slightly to form a 
closed geodesic which spirals some distance towards the cusp before spiralling 
back and closing up, as in \fullref{figspiral}.  By varying the amount of 
spiralling, we can obtain infinitely many such closed geodesics.  Moreover, 
with the extra dimension in three-dimensional cusped manifolds, these geodesics
should also typically avoid self-intersection.
\begin{figure}[ht!]
  \begin{center}
    \includegraphics[width=\textwidth]{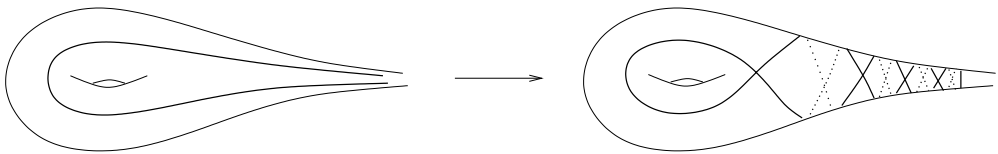}
  \end{center}
  \caption[A spiralling closed geodesic in a hyperbolic 2--manifold]
  {A cusp-to-cusp geodesic can be perturbed slightly to form a
closed curve, spiralling some way out into the cusp.}
  \label{figspiral}
\end{figure}

The work in this paper was part of the PhD thesis~\cite{phd}, where an 
existence result for geodesic knots in closed hyperbolic 3--manifolds is also 
proved.  In particular it is shown that if a closed hyperbolic 3--manifold 
satisfies certain geometric and arithmetic conditions, then it contains 
infinitely many geodesic knots.  The conditions on the manifold are checkable, 
and have been verified for many manifolds in the Hodgson--Weeks closed census.
This result will appear in the subsequent paper~\cite{closed}.

This was partially supported by an Australian Postgraduate Award.
I wish to thank Craig Hodgson for his suggestions and help.


\section{An infinite family of closed geodesics}
\label{secgammapq}
\vspace{4pt}

To prove \fullref{thminfinite}, we start by defining a suitable class of 
spiralling closed geodesics in dimension three.  Let our cusped hyperbolic 
3--manifold be $M=\H^3/\Gamma$ for a discrete torsion-free group of isometries
$\Gamma\subset\Isom^+(\H^3)$.  Choose a cusp $C$ of $M$ and expand a horoball 
neighbourhood of it with torus boundary.  The maximal embedded horoball 
neighbourhood $U$ has boundary $T$ a torus with a finite number of 
self-tangencies.  Pick a point of self-tangency, a \emph{bumping point} $A$;
then let $\gamma_\infty$ be the infinite geodesic passing from $C$ to itself 
through $A$, perpendicular to $T$, as shown schematically in 
\fullref{figgammainfty}.
\vspace{4pt}
\begin{figure}[ht!]
  \begin{center}
      \labellist
        \small\hair 2pt
        \pinlabel $M$ at 190 200
        \pinlabel $T$ at 103 177
        \pinlabel { {\color{red} cusp $C$} } at 305 183
        \pinlabel $A$ at 190 96
        \pinlabel { {\color{blue} geodesic $\gamma_\infty$} } at 260 36
      \endlabellist
      \includegraphics[height=3.5cm]{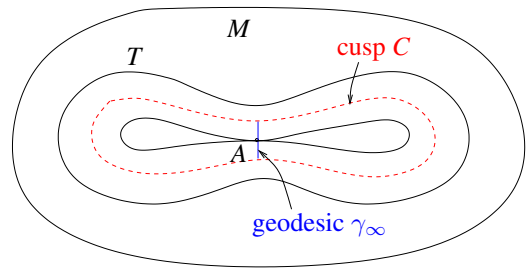}
  \end{center}
  \caption{The infinite cusp-to-cusp geodesic $\gamma_\infty$}
  \label{figgammainfty}
\end{figure}
\vspace{4pt}

Consider a lift $\widetilde{A}\in\H^3$ of the bumping point $A$ under
the covering projection $\pi\co\H^3=\widetilde{M}\to M$.
Then the preimage $\pi^{-1}(T)$ of $T$ contains two horospheres say
$H$ and $H'$ which intersect precisely at $\widetilde{A}$.  See
\fullref{figadjacent}.
\vspace{4pt}
\begin{figure}[ht!]
  \begin{center}
     \labellist
       \small\hair 2pt
       \pinlabel $H$ at 55 137
       \pinlabel $H'$ at 180 160
       \pinlabel $\wtilde{A}$ at 157 83
     \endlabellist
     \includegraphics[height=3.5cm]{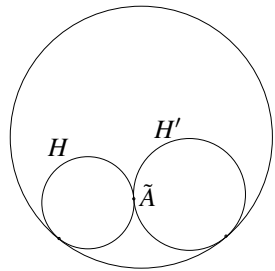}
  \end{center}
  \caption{Adjacent horospheres $H$ and $H'$ in the Poincar\'e ball
model of $\H^3$}
  \label{figadjacent}
\end{figure}
\vspace{4pt}
Since they both cover $T$, $H$ and $H'$ are related by certain       
isometries in $\Gamma$.       
The closed geodesics that we consider in $M$ are those       
corresponding to all possible isometries in $\Gamma$ mapping $H$ to $H'$.
\vspace{4pt}

Let $g\in\Gamma$ be an arbitrary such isometry taking $H$ to $H'$,
and choose generators $a,b\in\Gamma$ for the cusp group of $C$
which fix $H'$ in $\H^3$.
Then the isometries in $\Gamma$ mapping $H$ to $H'$
are precisely those composed of
\begin{itemize}
\item an isometry fixing $H$, followed by
\item any given isometry mapping $H$ to $H'$, followed by
\item an isometry fixing $H'$.
\end{itemize}
This yields an element of the form
\begin{align*}
a^{p_1}b^{q_1}\cdot g\cdot (g^{-1}ag)^{p_2}(g^{-1}bg)^{q_2}
&=a^{p_1}b^{q_1}a^{p_2}b^{q_2}g &&\text{for $p_i,q_i\in\Z$}\\
&=a^pb^qg &&\text{for $p,q\in\Z$}\\
&=:g_{p,q}
\end{align*}
Denote the geodesic axis in $\H^3$ corresponding to $g_{p,q}$ by
$\widetilde{\gamma}_{p,q}$, and the closed geodesic in $M$ by
$\gamma_{p,q}$.
So for any given $g$, $a$ and $b$, the set of geodesics corresponding to all 
possible isometries mapping $H$ to $H'$ can be parametrised by $\Z\times\Z$.
Later we will make a canonical choice for $g$, given elements $a$ and $b$ 
generating the cusp group, and 
will see that as $|(p,q)|\to\infty$, the geodesic axes 
$\widetilde{\gamma}_{p,q}$ approach a lift of the infinite geodesic 
$\gamma_\infty$.
This limiting behaviour is in fact independent of the choice of $g$, $a$ and 
$b$.


\section{A normalised preimage in $\H^3$}
\label{secnormalised}

To study the limiting behaviour of the geodesics
$\gamma_{p,q}$ and hence show that infinitely many are simple, we
normalise the position of the preimage of $T$ in $\H^3$. To do this
we must first make more precise our notation for the generators $a,b$
of the cusp group, and their corresponding loops in $M$.

Let $\widehat{T}_r$ denote the embedded horospherical torus around $C$
which is hyperbolic distance $r$ further into the cusp than $T$.
Then $a$ and $b$ correspond to elements of $\pi_1(\widehat{T}_r)$.
Pick an $r>0$.  Then there is a projection map
$\hat{\pi}\co\widehat{T}_r\to T$ mapping points radially from the cusp,
so that two points $\widehat{A}$ and $\widehat{B}$ on the nonsingular torus
$\widehat{T}_r$ are mapped to the singular point $A$ of $T$,
as shown in \fullref{figtorusproj}.
\begin{figure}[ht!]
  \begin{center}
    \labellist
      \small\hair 2pt
      \pinlabel $r$ at 185 120
      \pinlabel $\widehat{B}$ at 150 69
      \pinlabel $\widehat{A}$ at 125 37
      \pinlabel $\widehat{T}_r$ at 250 80
      \pinlabel $\what{\pi}$ at 362 67
      \pinlabel $A$ at 575 65
      \pinlabel $T$ at 690 80
    \endlabellist
    \includegraphics[width=\textwidth]{\figdir/torusproj}
  \end{center}
  \caption[The projection map $\hat{\pi}\co\widehat{T_r}\to T$,  
    mapping two points $\widehat{A}$ and $\widehat{B}$ to $A$.]
{The projection map $\hat{\pi}\co\widehat{T}_r\to T$
  maps two points $\widehat{A}$ and $\widehat{B}$ to $A$.}
  \label{figtorusproj}
\end{figure}
Let $\widehat{\alpha},\widehat{\beta}\in\pi_1(\widehat{T}_r)$ be
loops on $\widehat{T}_r$ representing $a$ and $b$ respectively.
Since $a$ and $b$ generate the cusp group,
$\widehat{\alpha}$ and $\widehat{\beta}$ are dual oriented
non-contractible simple closed curves on $\widehat{T}_r$ which
after isotopy we may assume are
Euclidean geodesics with common basepoint $\widehat{A}$.
We will generally identify these curves with their image curves
$\alpha:=\hat{\pi}(\widehat{\alpha})$ and              
$\beta:=\hat{\pi}(\widehat{\beta})$ on $T$,
pulling back to $\widehat{T}_r$ to avoid complications arising from the
self-tangencies of $T$.

We now work in the upper half-space model of $\H^3$, and regard 
$\Gamma\cong\pi_1(M)$ as a subgroup of $\PSL_2(\C)$.
Let $\C_\infty=\C\cup\{\infty\}$ denote the sphere at infinity of the upper 
half-space model of $\H^3$, and let the horosphere centred at $z\in\C_\infty$ 
be $H_z$.  The preimage $\pi^{-1}(T)$ of $T$ is a union of horospheres
meeting tangentially at the preimage of the singular point(s) of $T$.
By an isometry, normalise so that $H'$ is the horizontal
horosphere $H_\infty$ at height 1, and $H$ is the horosphere $H_0$,
tangent to $H'$.
Parametrise $H_\infty$ by $\C$ to match with $\C_\infty$ under vertical
translation.
\vspace{4pt}

The preimages of the curves $\alpha,\beta\subset T$ determine a
tiling of each horosphere $H_z\subset\pi^{-1}(T)$ by parallelograms.
Each parallelogram contains two points covering the bumping point
$A\in M$, corresponding to $\widehat{A}$ (at the vertex of the
parallelogram) and $\widehat{B}$ in $\widehat{T}_r$.
On $H_\infty$ we see this as a tiling of the complex plane.
Let $t_\alpha$ and $t_\beta$ be the complex numbers corresponding to
translation along lifts of the curves $\alpha$ and $\beta$ on
$H_\infty$.
Thus the parabolic elements $a,b\in\Gamma$ have representations
\begin{displaymath}
a=\begin{pmatrix} 1 & t_\alpha \\ 0 & 1 \end{pmatrix}
\qquad\text{and}\qquad
b=\begin{pmatrix} 1 & t_\beta \\ 0 & 1 \end{pmatrix}\ .
\end{displaymath}
Let the lift of $A$ at complex coordinate 0 on $H_\infty$ be
$A_{0,0}$, and
let the other lift of $A$ in the parallelogram
$\{rt_\alpha+st_\beta:r,s\in[0,1)\}\subset H_\infty$ be $B_{0,0}$.
For $(p,q)\in\Z^2$
let $A_{p,q}$ be the lift of $A$ on $H_\infty$ obtained from
$A_{0,0}$ by translation by the complex number
$a_{p,q}:=pt_\alpha+qt_\beta$,
and define $B_{p,q}$ and $b_{p,q}$ similarly.
So the horosphere $H_{a_{p,q}}\subset\pi^{-1}(T)$ is tangent to $H_\infty$ at 
the point $A_{p,q}$ and $H_{b_{p,q}}\subset\pi^{-1}(T)$ is tangent at 
$B_{p,q}$.  See \fullref{fighorospheres}.
\vspace{6pt}
\begin{figure}[ht!]
  \begin{center}
    \labellist
      \small\hair 2pt
      \pinlabel $H'$ at 83 131
      \pinlabel $H$ at 139 92
      \pinlabel $0=a_{0,0}$ at 168 52
      \pinlabel $b_{0,0}$ at 240 85
      \pinlabel $a_{1,0}$ at 305 52
      \pinlabel $b_{1,0}$ at 377 85
      \pinlabel $a_{2,0}$ at 442 52
      \pinlabel $\C_\infty$ at 550 90
      \pinlabel { { \color{red} $t_\alpha$} } at 227 137
      \pinlabel { { \color{blue} $t_\beta$} } at 170 190
      \pinlabel $A_{0,0}$ at 144 165
      \pinlabel $A_{1,0}$ at 281 165
      \pinlabel $A_{2,0}$ at 418 165
      \pinlabel $B_{0,0}$ at 249 198
      \pinlabel $B_{1,0}$ at 383 198
      \pinlabel $A_{0,1}$ at 205 232
      \pinlabel $A_{1,1}$ at 341 232
      \pinlabel $A_{2,1}$ at 477 232
      \pinlabel $B_{0,1}$ at 310 267
      \pinlabel $B_{1,1}$ at 451 267
      \pinlabel $H_\infty$ at 567 193
    \endlabellist    
    \includegraphics[height=5.3cm]{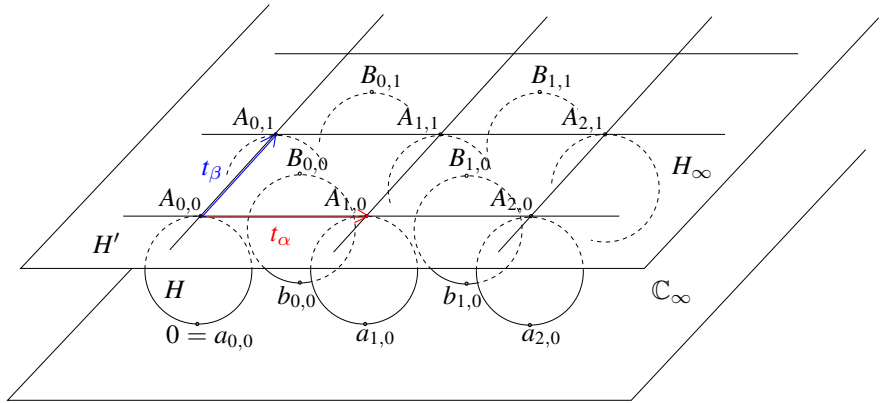}
  \end{center}             
\caption{The normalised preimage of $T$ in the upper half-space model of $\H^3$}
\label{fighorospheres}                   
\end{figure}
\vspace{6pt}

It is natural to express points on $H_\infty$ by their
coordinates with respect to $\alpha$ and $\beta$. 
Let the point $B_{0,0}$ be given by $b_{0,0}=x_0t_\alpha+y_0t_\beta$,
where $x_0,y_0\in[0,1)$ and $x_0,y_0$ are not both 0, since
$B_{0,0}\neq A_{0,0}$.
Then $a_{p,q}=pt_\alpha+qt_\beta$ and
$b_{p,q}=(p+x_0)t_\alpha+(q+y_0)t_\beta$.
\vspace{6pt}
\begin{defn}
Let $z\in H_\infty$ be at complex position $z=xt_\alpha+yt_\beta$.  We
call $x$ the \emph{$t_\alpha$-coordinate} of $z$ and $y$ the
\emph{$t_\beta$-coordinate} of $z$.
\end{defn}
\vspace{6pt}

The element $g\in\Gamma$ was chosen as an arbitrary isometry
mapping $H=H_0$ to $H'=H_\infty$.  Thus $g$ maps
\begin{itemize}
\item[(i)] 0 to $\infty$ on the sphere at
infinity $\C_\infty$, and
\item[(ii)] bumping point $A_{0,0}$ to some $B_{p,q}$ on $H_\infty$.
\end{itemize}
\vspace{6pt}

Given basis curves $\alpha$ and $\beta$, there is a canonical choice
for $B_{p,q}$ in (ii), namely $B_{0,0}$.
Then, since the M\"obius transformation corresponding to $g$ must map
$0=a_{0,0}\ \mapsto\ \infty\ \mapsto\ b_{0,0}$,
we find that $g$ is of the form       
\begin{equation}
\label{eqgmatrix}
g=\begin{pmatrix} cb_{0,0} & -\frac{1}{c} \\ c & 0 \end{pmatrix}
\qquad\quad\text{for some $c\in\C$\,.}
\end{equation}
Therefore, with this normalisation
the geodesics $\gamma_{p,q}$ are determined by the axes of the
isometries 
\begin{align}
g_{p,q}&=a^pb^qg\notag\\
 &=\begin{pmatrix} 1 & pt_{\alpha}+qt_{\beta} \\
    0 & 1 \end{pmatrix}
    \begin{pmatrix} cb_{0,0} & -\frac{1}{c} \\
    c & 0 \end{pmatrix}\notag\\
 &=\begin{pmatrix} cb_{p,q} & -\frac{1}{c} \\
    c & 0 \end{pmatrix}\label{eqg_pqmatrix}
\end{align}
which map horospheres
$H_0=H_{a_{0,0}}\to H_\infty\to H_{b_{p,q}}$, taking the point
$A_{0,0}$ to $B_{p,q}$.
\newpage

Let $\widetilde{\gamma}_{p,q}\subset\pi^{-1}(\gamma_{p,q})$ be the
axis of this isometry $g_{p,q}$.  Then $\widetilde{\gamma}_{p,q}$
has endpoints $z=z_\pm\in\C_\infty$ satisfying
$${\frac{cb_{p,q}z-\frac{1}{c}}{cz}=z},$$
so that
\begin{align}
z_\pm &= \frac{cb_{p,q}\pm\sqrt{c^2b_{p,q}^2-4}}{2c} \notag\\
 &= \frac{b_{p,q}}{2}\pm
 \sqrt{\left(\frac{b_{p,q}}{2}\right)^2-\frac{1}{c^2}}\label{eqendpts}
\end{align}
Since $|b_{p,q}|=|b_{0,0}+pt_\alpha+qt_\beta|\to\infty$ as
$|(p,q)|\to\infty$, we observe that
\[\Biggl|\sqrt{\left(\frac{b_{p,q}}{2}\right)^2-\frac{1}{c^2}}-\frac{b_{p,q}}{2}\Biggr| \to 0\] as
$|(p,q)|\to\infty$, and hence:
\begin{itemize}
\item[(i)] the endpoints $z_\pm$ satisfy $z_-\to 0$ and
$z_+-b_{p,q}\to 0$, and
\item[(ii)] the radius
$$\Biggl|\sqrt{\left(\frac{b_{p,q}}{2}\right)^2-\frac{1}{c^2}}\Biggr|$$
of the Euclidean semicircle giving the geodesic axis
$\widetilde{\gamma}_{p,q}$ approaches $\infty$,
\end{itemize}
as $|(p,q)|\to\infty$.
\vspace{6pt}

By (ii), for large enough $|(p,q)|$, $\widetilde{\gamma}_{p,q}$
intersects $H_\infty$ in two points, say $C_{p,q}$ closer to $z_-$
and $D_{p,q}$ closer to $z_+$, with complex coordinates $c_{p,q}$ and
$d_{p,q}$ respectively.
Due to (i) and (ii), $c_{p,q}\to a_{0,0}$ and $d_{p,q}-b_{p,q}\to 0$,
so the points $C_{p,q}$ approach $A_{0,0}$ and the points
$g_{p,q}^{-1}(D_{p,q})$ approach $g_{p,q}^{-1}(B_{p,q})=A_{0,0}$.
See \fullref{figgammainHth}.
\vspace{4pt}
\begin{figure}[ht!]
  \begin{center}
    \labellist
      \small\hair 2pt
       \pinlabel $\widetilde{\gamma}_\infty$ at 113 290
       \pinlabel $\widetilde{\gamma}_{p,q}$ at 208 309 
       \pinlabel $g_{p,q}^{-1}(D_{p,q})$ at 44 93
       \pinlabel $A_{0,0}$ at 112 106
       \pinlabel $a_{0,0}=0$ at 110 31 
       \pinlabel $c_{p,q}$ at 156 36
       \pinlabel $t_\beta$ at 136 123
       \pinlabel $C_{p,q}$ at 158 109
       \pinlabel $B_{0,0}$ at 208 133
       \pinlabel $t_\alpha$ at 194 90
       \pinlabel $b_{0,0}$ at 208 57
       \pinlabel $D_{p,q}$ at 458 223
       \pinlabel $B_{p,q}$ at 510 223
       \pinlabel $A_{p,q}$ at 401 198
       \pinlabel $d_{p,q}$ at 474 149
       \pinlabel $b_{p,q}$ at 507 149
       \pinlabel $a_{p,q}$ at 421 120
       \pinlabel $H_\infty$ at 446 85
       \pinlabel $\C_\infty$ at 448 14
     \endlabellist
    \includegraphics[width=\textwidth]{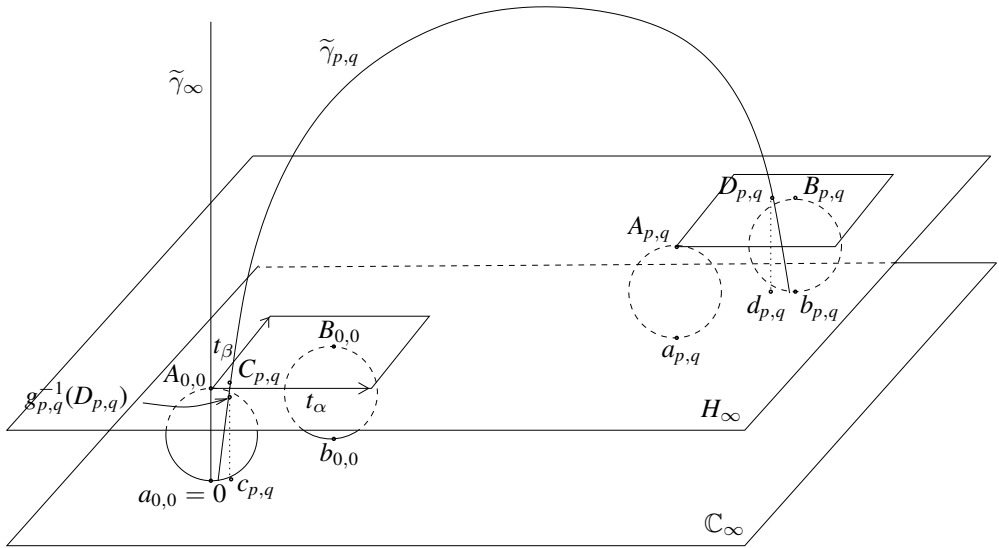}
  \end{center}
  \caption[The geodesic axis $\widetilde{\gamma}_{p,q}$
corresponding to the isometry $g_{p,q}$.]
{The geodesic axis
$\widetilde{\gamma}_{p,q}\subset\pi^{-1}(\gamma_{p,q})$
corresponding to the isometry $g_{p,q}$.
$\widetilde{\gamma}_{p,q}$
can be viewed as approaching the vertical axis
$\widetilde{\gamma}_\infty\subset\pi^{-1}(\gamma_\infty)$
as $|(p,q)|\to\infty$.}
  \label{figgammainHth}
\end{figure}
\vspace{4pt}

Therefore,
\begin{equation} \label{eqshortarctoa}
\text{the arc $(g_{p,q}^{-1}(D_{p,q}),C_{p,q})$ approaches the bumping
point $A_{0,0}$,}
\end{equation}
decreasing in length towards 0 as $|(p,q)|\to\infty$.
So for large enough $|(p,q)|$ this arc does not intersect
the preimage $\pi^{-1}(U)$ of the maximal horoball
neighbourhood $U$ of the cusp.
Henceforth we work only with geodesics
$\gamma_{p,q}$ for which $|(p,q)|$ is sufficiently large that this
occurs.
So projecting to $M$, $\gamma_{p,q}$ consists of two distinct arcs---the arc
$\pi([g_{p,q}^{-1}(D_{p,q}),C_{p,q}])$ outside the maximal
cusp neighbourhood $U$, and the arc $\pi((C_{p,q},D_{p,q}))$ inside 
the maximal cusp neighbourhood $U$.
The arc outside $U$ approaches the bumping point $A$, so we call it the 
\emph{short arc} $s_{p,q}$, while the arc inside $U$ increases in length 
towards infinity as $|(p,q)|\to\infty$, so is called the \emph{long arc} 
$l_{p,q}$.
See \fullref{figslarcs} for a schematic picture.
\vspace{4pt}
\begin{figure}[ht!]       
  \begin{center}       
    \labellist
      \small\hair 2pt
       \pinlabel $M$ at 190 205
       \pinlabel $T$ at 88 171
       \pinlabel $A$ at 214 119
       \pinlabel { { \color{green} short arc} } at 204 46
       \pinlabel { { \color{blue} long arc} } at 294 32
       \pinlabel { { \color{green} $s_{p,q}$ } } at 204 29
       \pinlabel { { \color{blue} $l_{p,q}$ } } at 294 16
     \endlabellist
    \includegraphics[height=5cm]{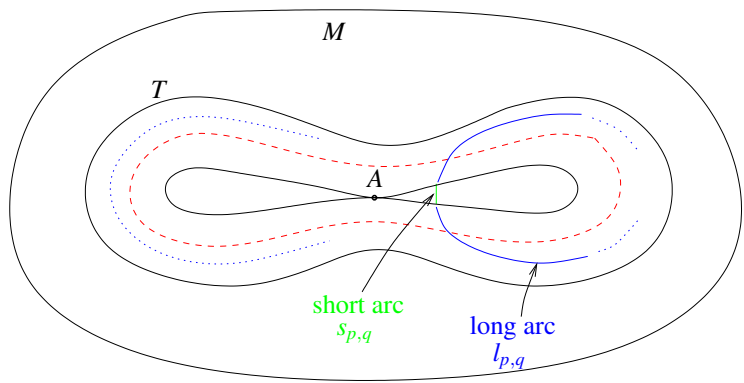}
  \end{center}             
  \caption{A schematic view of the short and long arcs of a geodesic
$\gamma_{p,q}$}       
  \label{figslarcs}   
\end{figure}
\newpage

So $\gamma_{p,q}$ is simple precisely when its two arcs $s_{p,q}$ and
$l_{p,q}$ are embedded in $M$.
We show that this holds for an infinite subfamily of these geodesics.


\section{An infinite subfamily is simple}
\label{secsimplesubfamily}

Let $\widetilde{s}_{p,q}$ denote the lift
$[g_{p,q}^{-1}(D_{p,q}),C_{p,q}]$ of the short arc $s_{p,q}$, and
$\widetilde{l}_{p,q}$ the lift $(C_{p,q},D_{p,q})$ of the long arc
$l_{p,q}$, shown in \fullref{figgammainHth}.
\begin{defn}
\label{defrclose}
Let $R$ be the region in $\H^3$ consisting of those points which are
closer to the point $A_{0,0}$ than to any other lift of the bumping
point $A$.
Say that the short arc $s_{p,q}$ is \emph{$R$--close} to $A$ if its lift
$\widetilde{s}_{p,q}$ in $\H^3$ is contained in $R$.
\end{defn}
By its definition, the projection of $R$ to $M$ is injective, and so
$s_{p,q}$ is embedded in $M$ if it is $R$--close to $A$.

\begin{lem} \label{lemshortsimple}
In any cusped hyperbolic 3--manifold $M$ all but at most finitely many
geodesics $\gamma_{p,q}$ have short arcs which are $R$--close to $A$
and hence embedded.
\end{lem}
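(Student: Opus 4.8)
The plan is to show that the region $R$ is an open convex neighbourhood of $A_{0,0}$ in $\H^3$, and that both endpoints of the lift $\widetilde{s}_{p,q}$ converge to $A_{0,0}$. Convexity of $R$ then forces the entire geodesic segment $\widetilde{s}_{p,q}$ into $R$ as soon as both its endpoints lie there, giving $R$--closeness (hence embeddedness) for all but finitely many $(p,q)$.

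First I would pin down the geometry of $R$. The preimage $\pi^{-1}(A)$ is the orbit of $A_{0,0}$ under $\Gamma$; since $\Gamma$ is discrete and torsion--free and $\pi$ is a covering map, this orbit is a discrete, locally finite subset of $\H^3$, so $A_{0,0}$ lies at positive distance from every other lift of $A$. By \fullref{defrclose}, $R$ is the intersection, over all other lifts $A'$, of the open half--spaces $\{x:\dist(x,A_{0,0})<\dist(x,A')\}$. Each such half--space is bounded by the perpendicular bisector of $A_{0,0}$ and $A'$, a totally geodesic plane, and is therefore convex; by local finiteness only finitely many of these inequalities are active near any given point, so $R$ is cut out locally by finitely many strict inequalities and is open. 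As an intersection of convex sets it is convex, and it contains $A_{0,0}$ since every other lift lies at positive distance. Thus $R$ is an open convex neighbourhood of $A_{0,0}$, namely its Dirichlet cell for the orbit $\pi^{-1}(A)$.

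Next I would invoke convexity. A geodesic segment in $\H^3$ whose two endpoints lie in a convex set is contained in that set, and $\widetilde{s}_{p,q}=[g_{p,q}^{-1}(D_{p,q}),C_{p,q}]$ is exactly such a geodesic arc. The analysis preceding \eqref{eqshortarctoa} shows that as $|(p,q)|\to\infty$ both endpoints converge to $A_{0,0}$ in $\H^3$: indeed $c_{p,q}\to a_{0,0}$ gives $C_{p,q}\to A_{0,0}$, while $d_{p,q}-b_{p,q}\to 0$ gives $g_{p,q}^{-1}(D_{p,q})\to g_{p,q}^{-1}(B_{p,q})=A_{0,0}$. Since $R$ is an open neighbourhood of $A_{0,0}$, there is an $N$ such that both endpoints lie in $R$ whenever $|(p,q)|>N$; convexity of $R$ then yields $\widetilde{s}_{p,q}\subset R$, so $s_{p,q}$ is $R$--close to $A$. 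The exceptional set $\{(p,q):|(p,q)|\le N\}$ is finite, and since the projection of $R$ to $M$ is injective, $R$--closeness gives that $s_{p,q}$ is embedded, which is the assertion of the lemma.

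The genuine content is the convexity step: it upgrades the mere convergence of the two endpoints---all that the earlier computation provides---into containment of the whole arc, so that no pointwise control along $\widetilde{s}_{p,q}$ is needed. I therefore do not expect a serious obstacle; the only mild point to verify is the openness of $R$, which rests on the local finiteness of the orbit $\pi^{-1}(A)$ rather than on any metric estimate.
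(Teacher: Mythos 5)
Your argument is correct, and each step checks out: $\pi^{-1}(A)$ is the $\Gamma$--orbit of $\widetilde{A}$, hence closed, discrete and locally finite; perpendicular bisectors in $\H^3$ are totally geodesic planes, so $R$ is an open convex Dirichlet--Voronoi cell about $A_{0,0}$; and the endpoint convergences $C_{p,q}\to A_{0,0}$ and $g_{p,q}^{-1}(D_{p,q})\to A_{0,0}$ are exactly what Section~\ref{secnormalised} establishes. Where you differ from the paper is in the containment mechanism. The paper does not use convexity of $R$ at all: it observes only that $R$ contains some metric ball $B_\epsilon(A_{0,0})$ (which needs just the positive distance from $A_{0,0}$ to the other lifts, via the triangle inequality) and then quotes \eqref{eqshortarctoa}, which asserts that the \emph{entire} arc $\widetilde{s}_{p,q}$ approaches $A_{0,0}$ with length tending to $0$, so that eventually $\widetilde{s}_{p,q}\subset B_\epsilon(A_{0,0})\subset R$. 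Your convexity step is a valid substitute, but your closing assessment---that convexity is ``the genuine content'', needed to upgrade mere endpoint convergence into containment of the whole arc---overstates what is required: $\widetilde{s}_{p,q}$ is a segment of a complete geodesic of $\H^3$, so its length \emph{equals} the hyperbolic distance between its endpoints, and endpoint convergence alone already traps the whole arc in any small ball about $A_{0,0}$; this is precisely how \eqref{eqshortarctoa} follows from the endpoint analysis preceding it. In short, a ball suffices where you build a convex cell: the paper's route is the more elementary, while yours has the modest virtue of making explicit the Voronoi-cell geometry of $R$ (its openness and convexity) that the paper leaves implicit in the phrase ``$R$ contains a ball $B_\epsilon(A_{0,0})$''.
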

\begin{proof}
By its definition, the region $R$ contains a ball
$B_\epsilon(A_{0,0})$ of radius $\epsilon$ around $A_{0,0}$ for some
$\epsilon>0$.  By \eqref{eqshortarctoa}, $\widetilde{s}_{p,q}$
approaches $A_{0,0}$ as $|(p,q)|\to\infty$, so there is some
$N_0\in\N$ such that if $|(p,q)|>N_0$, $\widetilde{s}_{p,q}\subset
B_\epsilon(A_{0,0})\subset R$.  Then $s_{p,q}$ is $R$--close to $A$ and
hence embedded.
\end{proof}

Consider now the long arcs.
Isotoping $l_{p,q}$ radially from the cusp to a  
curve $L_{p,q}\subset T$ corresponds to projecting the lift       
$\widetilde{l}_{p,q}$ vertically downwards to a line segment, say       
$\widetilde{L}_{p,q}$ on the horosphere $H_\infty$, as in       
\fullref{figl_pq}.       
\begin{figure}[ht!]     
  \begin{center}       
    \labellist
      \small\hair 2pt
       \pinlabel $A_{0,0}$ at 71 39
       \pinlabel $t_\beta$ at 101 79
       \pinlabel $t_\alpha$ at 135 41
       \pinlabel $C_{p,q}$ at 159 116
       \pinlabel { { \color{blue} $\widetilde{L}_{p,q}$ } } at 336 173
       \pinlabel $H_\infty$ at 440 71
       \pinlabel $D_{p,q}$ at 525 266
       \pinlabel $B_{p,q}$ at 547 219
     \endlabellist
    \includegraphics[height=4.8cm]{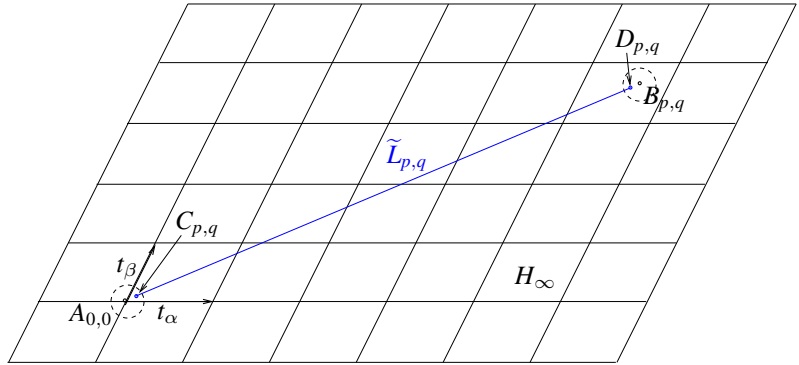}
  \end{center}
  \caption{A lift $\widetilde{L}_{p,q}$ of the radial projection of
$l_{p,q}$ to $T$}
  \label{figl_pq}
\end{figure}

\begin{lem} \label{lemlpqnonsimple}
The long arc $l_{p,q}$ of a geodesic $\gamma_{p,q}$ is nonsimple if
and only if the projected line segment $\widetilde{L}_{p,q}$ contains
two distinct points which differ by a translation by an integer linear
combination of $t_\alpha$ and $t_\beta$.
\end{lem}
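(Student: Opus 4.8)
The plan is to translate the geometric condition "$l_{p,q}$ is nonsimple" into the statement about $\widetilde{L}_{p,q}$ by carefully tracking the relationship between the long arc upstairs and its radial projection. The key observation is that radial projection from the cusp to $T$ is a homeomorphism on each horosphere, so the long arc $l_{p,q}$ downstairs has a self-intersection if and only if its radial projection $L_{p,q}$ onto $T$ has one. Since $L_{p,q}$ lies entirely inside the embedded cusp neighbourhood $U$ away from the self-tangency points, self-intersections of $L_{p,q}$ on $T$ correspond precisely to pairs of points on the lift $\widetilde{L}_{p,q}\subset H_\infty$ that project to the same point of $T$, i.e.\ that differ by a deck transformation of the cusp group.

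First I would make precise that the long arc $\widetilde{l}_{p,q}=(C_{p,q},D_{p,q})$ lies inside $\pi^{-1}(U)$, so its vertical projection to $H_\infty$ is well-defined and is exactly the segment $\widetilde{L}_{p,q}$ described before the lemma; moreover vertical projection commutes with the horizontal parabolic action, so it descends to the radial isotopy $l_{p,q}\rightsquigarrow L_{p,q}$ on $T$. Then I would argue that because radial projection is injective on horospheres, $l_{p,q}$ is nonsimple if and only if $L_{p,q}$ is nonsimple. Next I would use that the cusp group fixing $H_\infty$ is generated by the parabolics $a,b$ acting as translations by $t_\alpha,t_\beta$; hence two points of $H_\infty$ map to the same point of $T$ exactly when they differ by an element $pt_\alpha+qt_\beta$ with $(p,q)\in\Z^2$. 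Applying this to the two preimage points on $\widetilde{L}_{p,q}$ giving a self-intersection of $L_{p,q}$ yields the stated translation condition, and conversely any two distinct points on $\widetilde{L}_{p,q}$ differing by such a translation project to a genuine self-intersection of $L_{p,q}$ on $T$, hence of $l_{p,q}$ in $M$.

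The one point requiring care, and the main obstacle, is the role of the self-tangencies of $T$ at the bumping point(s). A priori a coincidence of two points on $\widetilde{L}_{p,q}$ could arise not from the cusp group but from the identification of the distinct lifts $H$ and $H'$ meeting at $\widetilde{A}$; I must rule out spurious ``intersections'' occurring only at the singular point $A$ of $T$ and confirm that the endpoints $C_{p,q}$ and $g_{p,q}^{-1}(D_{p,q})$ of the short arc (which do map to $A$) are not counted as self-intersections of the \emph{long} arc. This is handled by working on the nonsingular torus $\widehat{T}_r$ and pulling curves back via $\hat\pi$, as set up earlier in \fullref{secnormalised}; since the interiors $(C_{p,q},D_{p,q})$ lie strictly inside $U$, their projections avoid the singular locus and the only identifications available are genuine deck transformations of the cusp group. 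Once this is established, the biconditional follows directly, and the equivalence reduces the simplicity question for long arcs to an elementary lattice-translation condition on a straight segment in $\C$, which is exactly the form needed for the subsequent analysis.
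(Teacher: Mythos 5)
Your forward direction is fine and matches the paper, but there is a genuine gap in the converse, at precisely the step you state most casually: ``because radial projection is injective on horospheres, $l_{p,q}$ is nonsimple if and only if $L_{p,q}$ is nonsimple.'' Injectivity of the radial projection on each individual horosphere does not give the ``if'' direction, because the long arc is not contained in a single horosphere --- it crosses the whole range of depths up to its apex. A self-intersection of the projected curve $L_{p,q}$ on $T$ (equivalently, two distinct points of $\widetilde{L}_{p,q}$ differing by a lattice vector $pt_\alpha+qt_\beta$) corresponds a priori to two points of $\widetilde{l}_{p,q}$ at \emph{different} heights; since the deck transformations identifying points inside the cusp are the horizontal parabolic translations, which preserve height, such a pair need not be identified in $M$, and no self-intersection of $l_{p,q}$ follows. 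So as written your argument proves only the easy implication.

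The paper closes exactly this gap with a symmetry trick that your proposal is missing: given two distinct points of $\widetilde{L}_{p,q}$ differing by a lattice vector $v$, slide the pair along the straight segment $\widetilde{L}_{p,q}$ (the difference $v$ is unchanged) until the two points are equidistant from the midpoint of the segment; this stays inside the segment since $|v|$ is at most its length. Because the geodesic $\widetilde{l}_{p,q}$ is symmetric about the vertical line through its apex, which projects to the midpoint of $\widetilde{L}_{p,q}$, the symmetric pair lifts to two points of $\widetilde{l}_{p,q}$ at the \emph{same} height, hence the same distance from $T$; these are genuinely identified by the translation $a^pb^q$ and give a self-intersection of $l_{p,q}$ in $M$. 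Your remaining observations --- that $\widetilde{l}_{p,q}$ lies in the horoball above $H_\infty$ where the only identifications come from the cusp group $\langle a,b\rangle$, and the care about the singular point of $T$ --- are correct and implicit in the paper's setup, but they do not substitute for the midpoint argument, which is the actual content of the converse.
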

\begin{proof}
Clearly if $l_{p,q}$ has self-intersection the two points on
$\widetilde{L}_{p,q}$ giving this self-intersection must project to
the same point on $T$, and so differ by an integer linear combination of
$t_\alpha$ and $t_\beta$.
Conversely, if any two distinct points on $\widetilde{L}_{p,q}$ differ
by an integer linear combination of $t_\alpha$ and $t_\beta$, then so do two
points equidistant from the midpoint of $\widetilde{L}_{p,q}$, by
translation along the line segment.  Such equidistant points are
images of points at the same vertical height on the arc
$\widetilde{l}_{p,q}$ and thus the same distance from $T$ on
$l_{p,q}$.  These points therefore coincide in $M$, producing a
self-intersection of $l_{p,q}$.
\end{proof}

We are now ready to prove \fullref{thminfinite} which we recall
here:
\begin{cuspedthm} 
Every cusped orientable hyperbolic 3--manifold contains infinitely
many simple closed geodesics.
\end{cuspedthm}
\begin{proof}
By \fullref{lemshortsimple} it suffices to show that infinitely
many geodesics $\gamma_{p,q}$ have embedded long arcs.

Recall that $b_{0,0}=x_0t_\alpha+y_0t_\beta$, where $x_0,y_0\in
[0,1)$ and at least one of $x_0$ and $y_0$ is nonzero.
Assume without loss of generality that $y_0>0$---that is, the
$t_\beta$--coordinate of $b_{0,0}$ is positive.

Consider then the subfamily of geodesics given by $q=0$, ie those
given by elements of the form $g_{p,0}=a^pg$.  We know that $|c_{p,0}|
=\dist_{H_\infty}(A_{0,0},C_{p,0})=\dist_{H_\infty}(B_{p,0},D_{p,0})\to 0$
as $|p|\to\infty$.  Let $\delta=\min\{\frac{1}{2}y_0,\frac{1}{2}(1-y_0)\}$.
Then there exists $P\in\N$ such that for all $p$ with $|p|>P$,
$|c_{p,0}|<\delta$.
So for $|p|>P$, the complex number $d_{p,0}-c_{p,0}$ giving the
translation along $\widetilde{L}_{p,0}$ has $t_\beta$--coordinate in
the interval $(y_0-2\delta,y_0+2\delta)\subset (0,1)$.  Hence when
$|p|>P$, $\widetilde{L}_{p,0}$ cannot contain points differing by an
integer linear combination of $t_\alpha$ and $t_\beta$, and so
by \fullref{lemlpqnonsimple} the long arc $l_{p,0}$ is simple and
we are done.
\end{proof}

Note that the above result cannot be strengthened to show that all but finitely
many $\gamma_{p,q}$ are embedded.
For example, it is shown in~\cite{phd} that
in the figure-eight knot complement infinite families of nonsimple 
$\gamma_{p,q}$ occur.


We conclude by remarking that our proof also gives a description of the isotopy
classes of the simple geodesics $\gamma_{p,q}$, for $|(p,q)|$ sufficiently 
large.
They are the union of a long arc $l_{p,q}$ spiralling inside       
the maximal cusp neighbourhood $U$,       
and a short arc $s_{p,q}$ outside this cusp neighbourhood.       
The long arc can be isotoped to a       
Euclidean line segment $L_{p,q}$ on $T$, the (singular) torus boundary
of $U$,
while $s_{p,q}$ is a short connecting arc between the endpoints of
$L_{p,q}$, close to the singular point $A$ of $T$.       
See \fullref{figgammapq} for a schematic view of this description.
\begin{figure}[ht!]       
 \begin{center}       
    \labellist
      \small\hair 2pt
       \pinlabel $T$ at 489 161
       \pinlabel $A$ at 229 114
       \pinlabel { { \color{blue} $L_{p,q}$ } } at 240 196
       \pinlabel { { \color{green} $s_{p,q}$ } } at 340 99
       \pinlabel { { \color{red} cusp } } at 544 74
     \endlabellist
  \includegraphics[height=4cm]{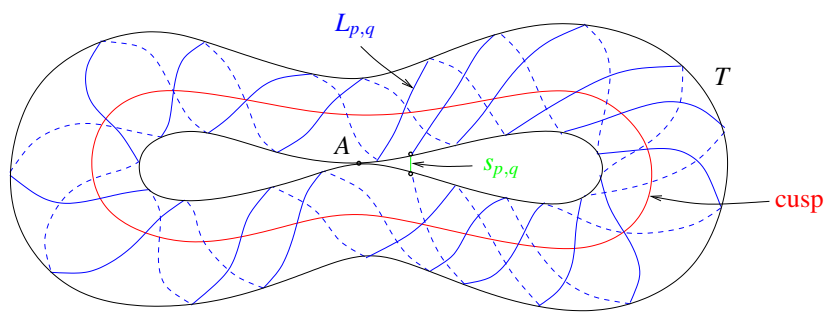}
 \end{center}                 
 \caption[The isotopy class of a geodesic $\gamma_{p,q}$.]
 {The isotopy class of a geodesic $\gamma_{p,q}$ can be              
described as the union of a long arc isotoped to lie on the singular  
torus $T$ and a short connecting arc outside the maximal cusp       
neighbourhood.}
 \label{figgammapq}       
\end{figure}         

In~\cite{phd} we investigate this further and obtain precise topological 
descriptions for the isotopy classes of infinite subfamilies of the geodesics
$\gamma_{p,q}$ in any cusped hyperbolic 3--manifold.
We also draw explicit projection diagrams for these geodesics in the case of 
the figure-eight knot complement.


\bibliographystyle{gtart}
\bibliography{link}

\end{document}